\newtheorem{theorem}{Theorem}[section]
\newtheorem{corollary}{Corollary}
\newtheorem{lemma}[theorem]{Lemma}
\theoremstyle{definition}
\newtheorem{remark}{Remark}
\title{Asymptotic Stabilization of a Flexible Beam with an Attached Mass}
\author{Julia Kalosha\thanks{
Institute of Applied Mathematics and Mechanics, National Academy of Sciences of Ukraine, Sloviansk ({\tt\small  julykucher@gmail.com, alexander.zuyev@gmail.com})
        \newline
$^{**}$Otto von Guericke University Magdeburg, Germany\newline
$^{***}$Max Planck Institute for Dynamics of Complex Technical Systems, Magdeburg, Germany\newline
},  Alexander Zuyev$^{*,**,***}$}
\date{}
\begin{document}
\maketitle




\begin{abstract}
A mathematical model of a simply supported Euler--Bernoulli beam with attached spring-mass system is considered. The model is controlled by distributed piezo actuators and  lumped force. We address the issue of asymptotic behavior of solutions of this system driven by a linear feedback law. The precompactness of trajectories is established for the operator formulation of the closed-loop dynamics. Sufficient conditions for strong asymptotic stability of the trivial equilibrium are obtained.
\end{abstract}

{\bf Keywords:} Euler--Bernoulli beam; stabilization; asymptotic stability; Lyapunov functional.

{\bf MSC2020:} 93D15, 93D20, 74K10.

\section{Introduction}

Mechanical structures with flexible beams are widely used in modern engineering, particularly in areas such as spacecraft manufacturing, large-scale robotics, wind turbine industry, and offshore drilling technology.
Along with rapid ongoing technological progress, the line between control engineering and mathematical control theory is blurring.
Inspired by industrial challenges, the control theory of elastic distributed parameter systems has been refined over the past several decades.
The overview of some important results in the field of stabilization of mechanical systems with flexible beams is presented in Section~\ref{sec_RelWork}.
This paper is devoted to the stabilization problem of a flexible beam model with $k$ distributed piezo actuators and attached mass (shaker).

Our study is motivated by the needs of rigorous theoretical treatment of an experimental setup considered in~\cite{DSK2014}. The current paper continues our previous works on the nonasymptotic stability analysis~\cite{ZK2013} and  eigenvalues distribution~\cite{KZB2021} of this class of systems.

The considered model consists of a flexible beam of the length $l$ and a controlled spring-mass system (shaker) attached at a point with the coordinate $l_0\in (0,l)$. The state of thia system is described by the cross-sectional deflection $w(x,t)$ of the beam centerline at a point $x\in(0,l)$ and time $t$; $E(x)>0$ and $I(x)>0$ are the Young modulus and the cross-section moment of inertia, respectively; $\rho(x)>0$ is the mass per unit length of the beam. A sketch of the considered mechatronic model is shown in Figure~\ref{pic_ShakerModel}.
\begin{figure}[htp]
\begin{center}
  \includegraphics[width=2in]{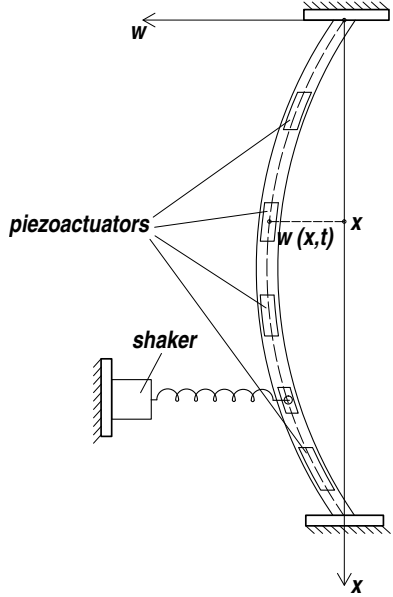}\\
  \caption{Sketch of the model}\label{pic_ShakerModel}
  \end{center}
\end{figure}

The distributed control is provided by $k$ piezoelectric actuators. The action of the $j$-th actuator is described by the torque density $M_j$ and the shape function $\psi_j(x)$ that satisfies the assumptions ${{\rm supp}\,\psi_j\cap\{0,l_0,l\}=\emptyset}$, $\psi_j''(x)\in C^2[0,l]$, $j=\overline{1,k}$. The lumped control is the force $F$ implemented by the electromagnetic shaker at $l_0\in(0,l)$; $m>0$ and $\varkappa>0$ are the mass of the moving part of the shaker and its stiffness, respectively. The dynamics of the system is described by the Euler--Bernoulli differential equation
\begin{equation}\label{Osc}
\rho(x)\ddot w(x,t)+E(x)I(x)w''''(x,t)=\sum\limits_{j=1}^k\psi_j''(x)M_j,\quad x\in(0,l)\setminus\{l_0\}
\end{equation}
subject to the boundary conditions
\begin{equation}\label{BC}
\begin{aligned}
&w(0,t)=w(l,t)=0,\\
&w''(0,t)=w''(l,t)=0
\end{aligned}
\end{equation}
and the interface condition
\begin{equation}\label{IC}
\left.\left(m\ddot w(x,t)+\varkappa w(x,t)\right)\right\rvert_{x=l_0}=E(x)I(x)\left(\left.w'''(x,t)\right\rvert_{x=l_0-0}-\left.w'''(x,t)\right\rvert_{x=l_0+0}\right)+F.
\end{equation}
Here the dot denotes time derivative,
and the prime stands for the derivative  with respect to $x$.

A feedback control that provides stability of the trivial equilibrium of the above system in Lyapunov's sense has been proposed in the paper~\cite{ZK2013}. The question about the asymptotic stability of this flexible system with attached mass remained open so far.
{\em The positive answer to this question is the main result of this paper that will be revealed in Theorem~\ref{th_2}.}

The structure of our paper is as follows. A survey of related results in the literature is summarized in Section~\ref{sec_RelWork}.
A special emphasis is put on distributed parameter systems described by wave and beam equations.
The operator representation of control system~\eqref{Osc}--\eqref{IC} is presented in Section~\ref{sec_DifOp} together with the necessary auxiliary results. The maximal invariant set is studied as well.
Our asymptotic stability study is based on an infinite-dimensional version of LaSalle's invariance principle~\cite{L1976} (cf.~\cite{Z2006})
where the precompactness of trajectories plays a crucial role. Thus, we present a detailed precompactness analysis in Section~\ref{swc_Precomp}.

The following contributions distinguish our results from the known ones in the literature:
\begin{itemize}
  \item explicit design of feedback controllers that ensure strong asymptotic stability of the closed-loop;
  \item asymptotic stability conditions with an arbitrary number of actuators;
  \item sufficient conditions for the precompactness of trajectories (Theorem~\ref{th_1}) allowing the localization of limit sets which are applicable for the case of actuators
  in the nodes of eigenfunctions.
\end{itemize}

\section{Related Work}\label{sec_RelWork}
The stability and control theory of dynamical systems described by hyperbolic differential equations has been developed by means of different approaches in various works since the second half of the 20th century.

In~\cite{R1967}, the nonharmonic Fourier theory is extended for second-order differential equations and the properties of eigenvalues of the moment problem are derived under basic assumptions on the control function.

The optimal control problems for the elastic beam vibrations are discussed in details in~\cite{K1972} for all the main kinds of boundary conditions derived from the mechanical set-up. Some general types of control, including impulse, are discussed for symmetric hyperbolic systems, while the case of the distributed load control is described for the vibrating beam model.

The generalization of the finite-dimensional linear control problem to an abstract linear control problem in Hilbert space is given in~\cite{K1992}. The conditions of controllability of vibrating strings and beams with distributed and boundary controls are presented in general fashion.

Since it is suitable to investigate the trajectories of complex mechanical systems in the form of properties of abstract differential equations in infinite-dimensional spaces, many results would not have been possible without the developed theory of $C_0$-semigroups, which is extensively covered in~\cite{LGM1999},~\cite{P2012} with applications to the stability problems.

Strong stability, stabilizability and detectability are studied by virtue of spectral methods in~\cite{O2000} for those control systems with bounded input and output operators that are not necessarily exponentially stable.

Some implementations of semigroups theory to the observability and controllability problems for abstract wave equations are reviewed in~\cite{KL2005}.

The authors of~\cite{DZ2006} brought universal approaches employing the non-harmonic Fourier series to the investigation of the controllability of networks of strings giving by that a useful tools for analysing the beam equations as well.

The inequalities of controllability of the linear partial differential equations are proved in~\cite{C2007}.

Semigroup theory and its application to the investigation of controllability, observability, stabilizability, exponential stabilizability and detectability of linear control systems with bounded input and output operators in infinite-dimensional spaces is covered in an integrated fashion in~\cite{CZ2012}.

Asymptotic stability properties of the boundary control Euler-Bernoulli beam connected to a nonlinear mass spring damper system is provided in~\cite{LZR2017} under weak assumptions on non-linear spring and damper.

A series of works by M.Shubov et al. (\cite{SS2016},~\cite{SK2018},~\cite{SK2019}) is dedicated to controllability and spectral analysis of Euler-Bernoulli beam models subject to different types of boundary conditions.

In recent years more and more mathematical interest is attracted by control problems arising in modelling of offshore drilling structures (\cite{LW2020},~\cite{LH2020}) and different kinds of robotic manipulators (\cite{OD2012},~\cite{WF2015}).

The stabilization problem for flexible-link manipulators with a payload is solved in~\cite{ZS2005} for the case of multiple passive joints
and in~\cite{ZS2007} for a manipulator represented by the Timoshenko beam model.

New results in the field of stabilization of infinite-dimensional dynamical systems in abstract spaces are presented in~\cite{SZ2021}.

Despite the advanced general approaches to the theory of stabilization and control of oscillating systems, many special cases have their peculiarities affecting the final stability results. For instance, the elastic structures consisting of flexible beams with distributed or boundary controls, multi-link networks of joint beams are widely studied in the context of controllability and stabilizability while the stability and asymptotic stability for the flexible beam with an attached rigid body and distributed control is the framework for investigation.

\section{Auxiliary Results}\label{sec_DifOp}
In order to treat the asymptotic stability property formally, we introduce an operator representation of the control system~\eqref{Osc}--\eqref{IC}.
For this purpose we consider the Hilbert space $X=\overset{\;\,\circ}H\,\!^2(0,l)\times L^2(0,l)\times \mathbb R^2$ equipped with the inner product
\begin{equation*}
\left\langle\left(
                                     \begin{array}{c}
                                       u_1 \\
                                       v_1 \\
                                       p_1 \\
                                       q_1 \\
                                     \end{array}
                                   \right),\left(
                                             \begin{array}{c}
                                               u_2 \\
                                               v_2 \\
                                               p_2 \\
                                               q_2 \\
                                             \end{array}
                                           \right)
\right\rangle_X=\int\limits_0^l\left(E(x)I(x)u_1''(x)\bar u_2''(x)+\rho(x)v_1(x)\bar v_2(x)\right)dx+\varkappa p_1\bar p_2+mq_1\bar q_2
\end{equation*}
and let the differential operator $\tilde{A}:D(\tilde{A})\to X$ with the domain
\begin{equation*}
{\emph D}(\tilde{A})=\left\{\xi=\left(\begin{array}{c}
                 u \\
                 v \\
                 p \\
                 q \\
               \end{array}\right)\in X:\quad
\begin{aligned}&u(x)\in H^4(0,l_0)\cap H^4(l_0,l),\\
&u''(0)=u''(l)=0,\\
&\left.u''\right\rvert_{x=l_0-0}=\left.u''\right\rvert_{x=l_0+0},\\
&v(x)\in\overset{\;\,\circ}H\,\!^2(0,l),\\
&p=u(l_0),\quad q=v(l_0)
\end{aligned}
\right\}\subset X
\end{equation*}
be as follows:
\begin{equation}\label{A}
\tilde{A}:\xi=\left(
               \begin{array}{c}
                 u \\
                 v \\
                 p \\
                 q \\
               \end{array}
             \right)
             \mapsto \tilde{A}\xi=
             \left(
               \begin{array}{c}
                 v \\
                 -\frac1{\rho(x)}\left(E(x)I(x)u''\right)''+\frac1{\rho(x)}\sum\limits_{j=1}^k\psi_j''(x)M_j \\
                 q \\
                 \frac1m(L-\varkappa p+F) \\
               \end{array}
             \right),
\end{equation}
where $y=(M_1,\dots,M_k,F)^T$ is the control and $L=E(x)I(x)\left(\left.u'''\right\rvert_{x=l_0-0}-\left.u'''\right\rvert_{x=l_0+0}\right)$.

A feedback control that ensures the stability of zero equilibrium was proposed in~\cite{ZK2013} as the following functionals:
\begin{equation}\label{Ctrl}
\begin{aligned}
M_j&=-\alpha_j\int\limits_0^l\psi_j''(x)v(x)dx,\quad\alpha_j>0,\quad j=\overline{1,k},\\
F&=-\alpha_0q,\quad\alpha_0>0.
\end{aligned}
\end{equation}

It was shown in~\cite{ZK2013} that the equations of motion~\eqref{Osc}--\eqref{IC} can be represented as differential equation in the operator form:
\begin{equation}\label{infdim}
\frac d{dt}\;\xi(t)=\tilde{A} \xi(t),\quad \xi(t)\in {X}.
\end{equation}

Consider the operator $A$ with the domain ${\emph D}(A)={\emph D}(\tilde{A})$ that acts exactly like $\tilde{A}$ with vanishing controls, i.e. $M_j=0$, $j=\overline{1,k}$, and $F=0$.

In this section, we will construct the inverse operators $A^{-1}$ and $\tilde{A}^{-1}$ and focus on some important features of both operators $A$ and $\tilde A$.

Here and in the sequel,  we will assume that $E$, $I$ and $\rho$ are positive constants for the sake of simplicity .

\begin{lemma} \label{le_1} The operator $\tilde{A}:D(\tilde{A})\to X$ is closed.
\end{lemma}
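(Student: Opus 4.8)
The plan is to verify closedness via the sequential criterion. Let $\xi_n=(u_n,v_n,p_n,q_n)\in D(\tilde A)$ with $\xi_n\to\xi=(u,v,p,q)$ in $X$ and $\tilde A\xi_n\to\eta=(\eta_1,\eta_2,\eta_3,\eta_4)$ in $X$; the goal is to show $\xi\in D(\tilde A)$ and $\tilde A\xi=\eta$. A preliminary observation is that, after substituting the feedback \eqref{Ctrl} into \eqref{A}, the control contributions depend on $\xi$ only through the bounded functionals $v\mapsto\int_0^l\psi_j''v\,dx$ and $\xi\mapsto q$; hence $\tilde A=A+B$ with $B\in\mathcal L(X)$, the limit may be taken freely in those terms, and the argument is essentially the one for $A$ itself.

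First I would handle the components that require no extra regularity. From the first component, $v_n\to\eta_1$ in $\overset{\;\,\circ}H\,\!^2(0,l)$, hence in $L^2(0,l)$, so comparing with $v_n\to v$ gives $v=\eta_1\in\overset{\;\,\circ}H\,\!^2(0,l)$. Since $\overset{\;\,\circ}H\,\!^2(0,l)$ embeds continuously in $C[0,l]$, we have $v_n(l_0)\to v(l_0)$; as $q_n=v_n(l_0)\to q$ and $q_n\to\eta_3$, this yields $q=v(l_0)=\eta_3$. Likewise $u_n\to u$ in $\overset{\;\,\circ}H\,\!^2(0,l)\subset C[0,l]$ gives $p=\lim p_n=\lim u_n(l_0)=u(l_0)$, and $u\in\overset{\;\,\circ}H\,\!^2(0,l)$ because that space is closed.

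The core step is the second component. Writing $r_n$ for the image of the control term, we have $r_n\to r$ in $L^2(0,l)$ because $v_n\to v$ there, hence $u_n''''=-\tfrac{\rho}{EI}\big((\tilde A\xi_n)_2-r_n\big)$ converges in $L^2(0,l_0)$ and in $L^2(l_0,l)$ to a function $g$ determined by $\eta_2$ and $v$. Since $u_n\to u$ in $L^2$ and $u_n''''\to g$ in $L^2$ on each subinterval, $u''''=g$ there, so $u\in H^4(0,l_0)\cap H^4(l_0,l)$. Moreover $\{u_n\}$ and $\{u_n''''\}$ are bounded in $L^2(0,l_0)$, and the estimate $\|f\|_{H^4(a,b)}\le C\big(\|f\|_{L^2(a,b)}+\|f''''\|_{L^2(a,b)}\big)$ on a bounded interval shows $\{u_n\}$ is bounded in $H^4(0,l_0)$; since $H^4(0,l_0)$ is compactly embedded in $C^3[0,l_0]$ and the $L^2$-limit is already $u$, we get $u_n\to u$ in $C^3[0,l_0]$, and the same on $[l_0,l]$. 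Passing to the limit in $u_n''(0)=u_n''(l)=0$, in $u_n''|_{l_0-0}=u_n''|_{l_0+0}$, and in $L_n=EI\big(u_n'''|_{l_0-0}-u_n'''|_{l_0+0}\big)$ then establishes the remaining defining conditions of $D(\tilde A)$ and, together with $p_n\to p$ and $q_n\to q$, the equality $\tilde A\xi=\eta$ componentwise.

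The main obstacle is exactly this regularity upgrade: the interface quantity $L$ and the conditions $u''(0)=u''(l)=0$ and $u''|_{l_0-0}=u''|_{l_0+0}$ are expressed through traces of $u''$ and $u'''$, which are \emph{not} continuous functionals on $\overset{\;\,\circ}H\,\!^2(0,l)$, so one must genuinely promote ``$u_n''''$ bounded in $L^2$ on each piece'' to $C^3$-convergence there; this is where the piecewise-$H^4$ structure of $D(\tilde A)$, the elliptic-type estimate above, and the Rellich--Kondrachov theorem enter. An alternative proof, available once the inverse operator is constructed later in this section, is simply to note that $\tilde A$ has a bounded inverse (or merely that $\lambda I-\tilde A$ is boundedly invertible for some $\lambda$), which makes closedness automatic.
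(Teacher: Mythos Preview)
Your argument is correct and complete. The decomposition $\tilde A=A+B$ with $B\in\mathcal L(X)$ is valid, the interpolation estimate $\|f\|_{H^4(a,b)}\le C\big(\|f\|_{L^2}+\|f''''\|_{L^2}\big)$ holds on a bounded interval (write $f$ as a cubic plus a fourfold integral of $f''''$), and together with the compact embedding $H^4\hookrightarrow C^3$ in one dimension this legitimately upgrades the $L^2$ convergence of $u_n$ and $u_n''''$ on each subinterval to $C^3$ convergence, so all boundary and interface traces pass to the limit. The usual subsequence argument (every subsequence has a further subsequence with $C^3$-limit $u$) closes the gap you left implicit.

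Your route, however, is genuinely different from the paper's. The paper does not verify the sequential criterion at all: it \emph{constructs} $\tilde A^{-1}$ explicitly by solving $\tilde A\xi=\hat\xi$ for arbitrary $\hat\xi\in X$ (integrating the fourth-order ODE piecewise and determining the constants from the interface and boundary conditions via a $4\times4$ linear system with nonzero determinant), then shows $\tilde A^{-1}:X\to X$ is bounded; closedness of $\tilde A$ is then immediate since an everywhere-defined bounded operator has a closed inverse. This is exactly the ``alternative proof'' you mention in your last sentence. The trade-off is clear: the paper's approach yields, as a by-product, the explicit form of $\tilde A^{-1}$, which is reused almost verbatim when building the resolvent $R_\lambda(\tilde A)$ in the precompactness theorem; your approach is more self-contained for the closedness claim alone and avoids solving the ODE, at the cost of invoking the elliptic-type estimate and Rellich--Kondrachov.
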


\begin{proof}
By solving the equation
\begin{equation}\label{InvOpEq}
\tilde{A}\xi=\hat\xi
\end{equation}
with respect to $\xi\in D(A)$ for  $\hat\xi=(\hat u,\hat v,\hat p,\hat q)^T\in X$, we obtain the inverse operator of the following form:
\begin{equation}\label{InvOp}
\tilde{A}^{-1}:\hat\xi=\left(
                        \begin{array}{c}
                          \hat u \vphantom{\begin{aligned}
                                       &\int\limits_0^x\\
                                       &\int\limits_x^l(s-x)
                                            \end{aligned}} \\
                          \hat v  \\
                          \hat p \vphantom{\int\limits_0^l} \\
                          \hat q  \\
                        \end{array}
                      \right)
\mapsto\tilde{A}^{-1}\hat\xi=
                 \left(
   \begin{array}{c}
     \left\{
   \begin{aligned}
    &B_1x+B_2x^3-\frac16\int\limits_0^x(s-x)^3\Gamma(\hat\xi(s))ds,\,x\leq l_0,\\
    &B_3(x-l)+B_4(x-l)^3+\frac16\int\limits_x^l(s-x)^3\Gamma(\hat\xi(s))ds,\,x>l_0
   \end{aligned}
   \right. \\
     \hat u(x) \\
     \frac{EI}\varkappa\left(\int\limits_0^l\Gamma(\hat\xi(s))ds+6(C_2-C_4)\right)-\frac1\varkappa(\alpha_0\hat p+m\hat q) \\
     \hat p \\
   \end{array}
 \right),
\end{equation}
where $\Gamma(\hat\xi(s))=-\frac1{EI}\left(\rho\hat v(x)+\sum\limits_{j=1}^k\psi_j''(x)\int\limits_0^l\psi_j''(x)\hat u(x)dx\right)$, and bounded linear functionals $B_1(\hat\xi),\dots,B_4(\hat\xi)$ are solutions of the algebraic system
\begin{equation*}
K\left(\begin{array}{c}
B_1\\B_2\\B_3\\B_4 \vphantom{\int\limits_0^{l_0}}
\end{array}\right)=-\frac16\left(
                             \begin{array}{c}
                               a_1(\hat\xi) \\
                               3a_2(\hat\xi) \\
                               a_3(\hat\xi) \\
                               6EIa_4(\hat\xi)-\varkappa\int\limits_0^{l_0}(l_0-s)^3\tilde\Gamma(\hat\xi(s))ds-\alpha_0\hat p-m\hat q \\
                             \end{array}
                           \right)
\end{equation*}
with the matrix
\begin{equation*}
K=\left(\begin{matrix}
l_0 & l_0^3 & l-l_0 & (l-l_0)^3 \\
1 & 3l_0^2 & -1 & -3(l-l_0)^2 \\
0 & 1 & 0 & l-l_0 \\
-\varkappa l_0 & 6EI-\varkappa l_0^3 & 0 & -6EI
\end{matrix}\right),
\end{equation*}
and $a_n(\hat\xi)=\int\limits_0^l(l_0-s)^{4-n}\tilde\Gamma(\hat\xi(s))ds$, $n=\overline{1,4}$.
As $$\det K=-EIl(l-l_0+1)-\frac\varkappa3l_0(l-l_0)^2(l-l_0+l_0^2)<0,$$
there exist constants $\bar B_j^n>0$ such that $B_1(\hat\xi),\dots B_4(\hat\xi)$ can be expressed as
$$B_j(\hat\xi)=\sum\limits_{n=0}^3\bar B_j^n\int\limits_0^l(l_0-x)^n\hat v(x)dx+\bar B_j^4\int\limits_0^{l_0}\hat v(x)dx+\bar B_j^5\hat q,\quad j=\overline{1,4}.$$ Note that $\hat B_j^n>0$ are determined only by mechanical parameters of the system considered.

The mapping~\eqref{InvOp} is obviously linear. Let the sequence $\{\hat\xi_n\}_{n=1}^\infty$ be bounded, i.e. $\|\hat\xi_n\|_X\leq\tilde C$. Let us show that the sequence $\{A^{-1}\hat\xi_n\}_{n=1}^\infty$ is bounded in $X$.

As $\hat\xi\in X$, the boundary conditions $\hat u(0)=\hat u(l)=0$ are fulfilled. According to the Poincar{\`e} inequality,
\begin{equation*}
\int\limits_0^l|\hat u|^2dx\leq c\int\limits_0^l\left|\frac{d\hat u}{dx}\right|^2dx,
\end{equation*}
$c>0$. As $\hat u(x)\in\overset{\;\,\circ}H\,\!^2(0,l)$, $H^2(0,l)\subset C^1(0,l)$, according to Lagrange's mean value theorem, there exists a point $\zeta\in(0,l)$ such that
$$\hat u'(\zeta)=\frac{\hat u(l)-\hat u(0)}l=0,$$
so
$$\hat u'=\int\limits_\zeta^x\hat u''(z)dz+\hat u'(\zeta)=\int\limits_\zeta^x\hat u''(z)dz.$$
Applying the Cauchy--Schwarz inequality, we have
\begin{equation*}
\begin{aligned}
\int\limits_0^l|\hat u'(x)|^2dx&=\int\limits_0^\zeta|\hat u'(x)|^2dx+\int\limits_\zeta^l|\hat u'(x)|^2dx=\int\limits_0^\zeta\left|\int\limits_\zeta^x\hat u''(z)dz\right|^2dx+\int\limits_\zeta^l\left|\int\limits_\zeta^x\hat u''(z)dz\right|^2dx\leq\\
&\leq\int\limits_0^\zeta\left((x-\zeta)\int\limits_\zeta^x|\hat u''(z)|^2dz\right)dx+
\int\limits_\zeta^l\left((x-\zeta)\int\limits_\zeta^x|\hat u''(z)|^2dz\right)dx
\overset{(\zeta,x)\subset(0,l)}{\leq}\\
&\leq\frac{l^2}2\int\limits_0^l|\hat u''(x)|^2dx.
\end{aligned}
\end{equation*}

The latter yields the existence of a positive definite quadratic form $\tilde\Lambda=\tilde\Lambda(C_1(\hat\xi),\dots C_4(\hat\xi))$ such that the following estimate is fulfilled:
\begin{equation*}
\|\tilde{A}^{-1}\hat\xi\|^2_X\leq\tilde\Lambda\|\hat\xi\|^2_X;
\end{equation*}
hence, the operator $\tilde{A}^{-1}:X\to X$ is bounded. As $D(\tilde{A}^{-1})=X$, it follows from~\cite[p.~162]{T1980} that the operator $\tilde{A}$ is closed in $X$.
\end{proof}

It will be shown in Theorem~\ref{th_1} that the operator $\tilde{A}$ is maximal in the sense that the range of $(I-\lambda\tilde{A})$ coincides with $X$ for some $\lambda>0$. Besides, the direct substitution leads to
$$\langle\tilde{A}\xi,\bar\xi\rangle_X\leq0,\quad\forall\xi\in D(\tilde{A}),$$
where $\bar\xi$ is the complex conjugate of $\xi$. According to Sobolev's embedding theorems, $H^4\subseteq H^2$, $H^2\subseteq L^2$ (``$\subseteq$'' stands for the compact embedding), which means that the operator $\tilde{A}$ is densely defined in $X$.

Summing up the above, the operator $\tilde{A}$, being densely defined, $m$-dissipative and closed in $X$, satisfies the conditions of the Lumer--Phillips theorem~\cite{LP1961}. Thus, we have
\begin{corollary}
The operator $\tilde{A}:X\to X$ is the infinitesimal generator of a $C_0$-semigroup of operators on $X$.
\end{corollary}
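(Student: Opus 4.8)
The plan is to derive the corollary as a direct application of the Lumer--Phillips generation theorem~\cite{LP1961}: once $\tilde A$ is shown to be densely defined, dissipative, closed, and such that $\mathrm{range}(I-\lambda\tilde A)=X$ for some $\lambda>0$, the theorem produces a $C_0$-semigroup (in fact a contraction semigroup) whose generator is $\tilde A$. Closedness is already available from Lemma~\ref{le_1}, and the remaining three items can be assembled from the material above. For the density, I would observe that $D(\tilde A)$ prescribes componentwise $H^4$-regularity on $(0,l_0)$ and $(l_0,l)$ in the displacement slot, $\overset{\;\,\circ}H\,\!^2(0,l)$ in the velocity slot, and the finite-dimensional couplings $p=u(l_0)$, $q=v(l_0)$; by the Sobolev embeddings $H^4\hookrightarrow H^2\hookrightarrow L^2$ noted in the excerpt these are dense in the respective factors of $X=\overset{\;\,\circ}H\,\!^2(0,l)\times L^2(0,l)\times\mathbb R^2$, and the boundary/interface constraints cut out only closed subspaces that remain dense, so $\overline{D(\tilde A)}=X$.

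Next I would establish dissipativity, $\langle\tilde A\xi,\bar\xi\rangle_X\le 0$ for all $\xi=(u,v,p,q)^T\in D(\tilde A)$. Inserting~\eqref{A} with the feedback~\eqref{Ctrl} into the inner product of $X$, the displacement slot contributes $\int_0^l EI\,v''\bar u''\,dx$ and the velocity slot contributes $\int_0^l\big(-(EIu'')''\bar v+\sum_j\psi_j''M_j\bar v\big)\,dx$. Integrating by parts twice on each of $(0,l_0)$, $(l_0,l)$ and using $u(0)=u(l)=0$, $u''(0)=u''(l)=0$, the interface continuity $u''|_{l_0-0}=u''|_{l_0+0}$, the $C^1$-continuity of $v$ at $l_0$ and $v(0)=v(l)=0$, the elastic bulk integrals reduce to a purely imaginary quantity plus the boundary term $-L\bar q$, where $L=EI(u'''|_{l_0-0}-u'''|_{l_0+0})$ and $\bar q=\overline{v(l_0)}$; this $-L\bar q$ cancels exactly against the $L\bar q$ coming from the shaker component $\tfrac1m(L-\varkappa p+F)$ of~\eqref{A}, while the spring contribution $\varkappa q\bar p-\varkappa p\bar q$ is again purely imaginary. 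What survives is $\sum_{j=1}^k M_j\int_0^l\psi_j''(x)v(x)\,dx+F\,\bar q$, which upon substituting $M_j=-\alpha_j\int_0^l\psi_j''v\,dx$ and $F=-\alpha_0 q$ equals $-\sum_{j=1}^k\alpha_j\big|\int_0^l\psi_j''(x)v(x)\,dx\big|^2-\alpha_0|q|^2\le 0$.

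For the range (maximality) condition I would invoke the construction in Lemma~\ref{le_1}, where the operator $\tilde A^{-1}:X\to X$ is exhibited explicitly and shown to be bounded, so that $0$ lies in the resolvent set of $\tilde A$; this is the maximality property that will be established in Theorem~\ref{th_1}, and in particular $\mathrm{range}(I-\lambda\tilde A)=X$ for small $\lambda>0$. Collecting these facts --- dense domain, dissipativity, closedness, and surjectivity of $I-\lambda\tilde A$ --- the Lumer--Phillips theorem applies and yields that $\tilde A$ generates a $C_0$-semigroup on $X$. I expect the dissipativity computation to be the main technical point: the bookkeeping of the boundary terms produced by the double integration by parts on the split interval $(0,l_0)\cup(l_0,l)$, and the verification that the jump of $u'''$ at $l_0$ pairs precisely with the shaker term --- through the weights $\varkappa$ and $m$ built into the inner product --- so that everything collapses to the manifestly nonpositive feedback dissipation. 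The density and range conditions are, by contrast, either inherited from Lemma~\ref{le_1}/Theorem~\ref{th_1} or immediate from the Sobolev embeddings.
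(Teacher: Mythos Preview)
Your proposal is correct and follows essentially the same route as the paper: both assemble the Lumer--Phillips hypotheses from closedness (Lemma~\ref{le_1}), dissipativity by direct computation, density via Sobolev embeddings, and the range condition from Theorem~\ref{th_1}. Your sketch of the dissipativity computation in fact spells out the ``direct substitution'' the paper leaves implicit; one small slip is that the perturbation argument from $0\in\rho(\tilde A)$ (openness of the resolvent set) yields $\mathrm{range}(I-\lambda\tilde A)=X$ for \emph{large} $\lambda>0$ in the paper's notation, not small $\lambda$, but since any $\lambda>0$ suffices for Lumer--Phillips this does not affect the conclusion.
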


Consequently, the Cauchy problem for~\eqref{infdim} with the initial value from $X$ is well-posed.

\begin{lemma}
The operator $A^{-1}:X\to X$ is compact.
\end{lemma}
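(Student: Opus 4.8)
The plan is to exploit the \emph{smoothing} action of $A^{-1}$, which is already visible from formula~\eqref{InvOp}. First note that $A^{-1}$ exists and is bounded on $X$: it is obtained from~\eqref{InvOp} by discarding the terms produced by the feedback~\eqref{Ctrl} (equivalently, by redoing the computation in the proof of Lemma~\ref{le_1} with $M_j=0$, $F=0$), and its boundedness is exactly the estimate established there. Writing $A^{-1}\hat\xi=(u,v,p,q)$ for $\hat\xi=(\hat u,\hat v,\hat p,\hat q)^T\in X$, the explicit formula shows three things: (i) on each of the subintervals $(0,l_0)$, $(l_0,l)$ the component $u$ is a cubic polynomial whose coefficients $B_j(\hat\xi)$ are bounded linear functionals of $\hat\xi$, plus an iterated primitive of $\hat v\in L^2(0,l)$, so that $u|_{(0,l_0)}\in H^4(0,l_0)$ and $u|_{(l_0,l)}\in H^4(l_0,l)$ with norms bounded by $C\|\hat\xi\|_X$; (ii) $v=\hat u\in\overset{\;\,\circ}H\,\!^2(0,l)$; and (iii) $p,q\in\mathbb R$ depend boundedly and linearly on $\hat\xi$. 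Moreover, since $A^{-1}\hat\xi$ lies in $D(A)\subset X$, the function $u$ automatically belongs to $\overset{\;\,\circ}H\,\!^2(0,l)$ as a function on the whole interval.

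I would then split $A^{-1}=C_1+C_2+C_3$ with $C_1\hat\xi=(u,0,0,0)$, $C_2\hat\xi=(0,\hat u,0,0)$, $C_3\hat\xi=(0,0,p,q)$, and prove each summand compact. The operator $C_3$ has range in the two-dimensional subspace $\{0\}\times\{0\}\times\mathbb R^2$, hence is compact. The operator $C_2$ is the bounded map $X\to\overset{\;\,\circ}H\,\!^2(0,l)$, $\hat\xi\mapsto\hat u$, followed by the inclusion $\overset{\;\,\circ}H\,\!^2(0,l)\hookrightarrow L^2(0,l)$ into the second slot of $X$; this inclusion is compact by the Rellich--Kondrachov theorem, so $C_2$ is compact. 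For $C_1$, I would take a bounded sequence $\{\hat\xi_n\}\subset X$, set $u_n$ equal to the first component of $A^{-1}\hat\xi_n$, use the compact embeddings $H^4(0,l_0)\hookrightarrow H^2(0,l_0)$ and $H^4(l_0,l)\hookrightarrow H^2(l_0,l)$ to pass to a subsequence with $u_n\to u$ in $H^2$ on each subinterval, and conclude, since the $X$-norm sees only $EI\int_0^l|u_n''-u''|^2dx=EI(\|u_n''-u''\|^2_{L^2(0,l_0)}+\|u_n''-u''\|^2_{L^2(l_0,l)})\to0$, that $C_1\hat\xi_n$ converges in $X$; hence $A^{-1}=C_1+C_2+C_3$ is compact as a sum of compact operators.

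The point needing the most care is showing that the limit $u$ is a genuine element of $X$, i.e. that it lies in $\overset{\;\,\circ}H\,\!^2(0,l)$ over the \emph{whole} interval, which requires the interface conditions at $l_0$ and the endpoint conditions at $0,l$ to survive the passage to the limit. This follows from the continuous embedding $H^2\hookrightarrow C^1$ on each subinterval: it forces the traces $u_n(0)$, $u_n(l)$, $u_n(l_0^{-})$, $u_n(l_0^{+})$, $u_n'(l_0^{-})$, $u_n'(l_0^{+})$ to converge to the corresponding traces of $u$, and since these satisfy $u_n(0)=u_n(l)=0$, $u_n(l_0^-)=u_n(l_0^+)$, $u_n'(l_0^-)=u_n'(l_0^+)$ for every $n$ (because $u_n\in\overset{\;\,\circ}H\,\!^2(0,l)$), the same relations hold for $u$, so $u\in\overset{\;\,\circ}H\,\!^2(0,l)$. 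Equivalently, the whole argument can be packaged by observing that $A^{-1}$ maps $X$ boundedly into $D(A)$ with its graph norm and that $D(A)\hookrightarrow X$ compactly, the latter being the same componentwise Rellich argument.
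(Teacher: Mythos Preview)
Your proof is correct and follows essentially the same route as the paper: both establish that $A^{-1}$ gains Sobolev regularity (the paper shows $A^{-1}$ is bounded from $X$ into the intermediate space $X'=H^3\times H^1\times\mathbb{C}^2$, while you work piecewise with $H^4$ on the two subintervals for the first component and $\overset{\;\,\circ}H\,\!^2$ for the second) and then invoke the Rellich--Kondrachov compact embeddings to conclude compactness into $X$. Your componentwise splitting $A^{-1}=C_1+C_2+C_3$ and the explicit verification that the interface and endpoint conditions pass to the limit are more detailed than the paper's one-line appeal to $X'\hookrightarrow X$, but the underlying mechanism is identical --- indeed, your closing remark about $A^{-1}:X\to D(A)$ bounded and $D(A)\hookrightarrow X$ compact is precisely the paper's argument in condensed form.
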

\begin{proof}
Exploiting the same technique as in Lemma~\ref{le_1}, one can obtain the following norm estimate for the inverse operator $A^{-1}$ with the domain $D(A^{-1})=X$:
\begin{equation*}
\|A^{-1}\hat\xi\|^2_{X'}\leq\Lambda\|\hat\xi\|^2_X
\end{equation*}
with some positive definite quadratic form $\Lambda$. Here $X'=H^3\times H^1\times {\mathbb C}^2$. So, the operator $A^{-1}:X\mapsto X'$ is closed. According to Sobolev's embedding theorems, $X'\subseteq X$; therefore, the operator $A^{-1}:X\mapsto X$ is compact.
\end{proof}

Note that $A^{-1}$ is well-defined and $D(A^{-1})=X$ (as far as equation~\eqref{InvOpEq} is solvable for any $\hat\xi\in X$). It is easy to verify that the operator $A^{-1}$ is skew-symmetric, i.e.
$$\langle A^{-1}\xi_1,\xi_2\rangle_X=-\langle\xi_1,A^{-1}\xi_2\rangle_X.$$
Thus, the operator $A^{-1}$ satisfies the conditions of the Hilbert--Schmidt theorem, consequently, the eigenvectors of $A^{-1}$ form a basis of $X$. Taking into account the match of eigenvectors of $A$ and $A^{-1}$, we come to the following corollary:
\begin{corollary}\label{cor_2}
Let $\{\xi_n\}_{n=1}^\infty$ be eigenvectors of the operator $A$. Then the system $\{\xi_n\}_{n=1}^\infty$ forms a basis of $X$.
\end{corollary}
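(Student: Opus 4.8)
The plan is to derive the corollary from the spectral theory of compact operators applied to $A^{-1}$, using the two structural facts already at our disposal: the compactness of $A^{-1}$ (the lemma on compactness of $A^{-1}$) and its skew-symmetry on $X$. First I would record that $A^{-1}$ is injective, so that $0$ is not one of its eigenvalues: $A^{-1}\xi=0$ gives $\xi=A(A^{-1}\xi)=0$. Since $A$ is also injective (its inverse has been constructed), the eigenvalue $0$ is excluded for $A$ as well, and $A\xi=\lambda\xi$ holds if and only if $A^{-1}\xi=\lambda^{-1}\xi$. Thus the eigenvectors of $A$ and of $A^{-1}$ coincide, and it suffices to prove that the eigenvectors of $A^{-1}$ form a basis of $X$.

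Second, I would verify the skew-symmetry identity $\langle A^{-1}\xi_1,\xi_2\rangle_X=-\langle\xi_1,A^{-1}\xi_2\rangle_X$. Writing $\eta_i=A^{-1}\xi_i=(u_i,v_i,p_i,q_i)^T\in D(A)$, so that $A\eta_i=\xi_i$, this reduces to showing that $A$ is skew-symmetric on its domain, i.e. $\langle A\eta_1,\eta_2\rangle_X=-\langle\eta_1,A\eta_2\rangle_X$. Expanding the $X$-inner product and inserting the definition of $A$ with vanishing controls, the elastic block requires integrating $\int_0^l EI\,u_1''''\,\bar v_2\,dx$ by parts twice on each of the subintervals $(0,l_0)$ and $(l_0,l)$; the boundary terms at $x=0$ and $x=l$ vanish because $u(0)=u(l)=u''(0)=u''(l)=0$, while the terms at $x=l_0$ reorganise — using continuity of $u$, $u'$, $u''$ across $l_0$, the jump relation $L=EI\big(u'''(l_0-0)-u'''(l_0+0)\big)$, the identifications $p=u(l_0)$, $q=v(l_0)$, and the weights $\varkappa$, $m$ appearing in $\langle\cdot,\cdot\rangle_X$ — into an expression antisymmetric in $\eta_1,\eta_2$. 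This interface bookkeeping at $l_0$ is the only place where the specific structure of the model enters, and it is the part to carry out with care (the remaining manipulations are mechanical).

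Finally, being bounded and skew-symmetric on all of $X$, the operator $A^{-1}$ is skew-adjoint, hence $iA^{-1}$ is compact and self-adjoint. By the Hilbert--Schmidt (spectral) theorem, $X$ admits an orthonormal basis $\{\xi_n\}_{n=1}^\infty$ of eigenvectors of $iA^{-1}$, equivalently of $A^{-1}$; since $0$ is not an eigenvalue, each $\xi_n$ is, by the correspondence above, an eigenvector of $A$. Therefore $\{\xi_n\}_{n=1}^\infty$ is a basis (indeed an orthogonal one) of $X$, which is the assertion of the corollary. The main obstacle, as indicated, is the integration-by-parts verification of skew-symmetry across the interface point $l_0$; everything afterwards is an immediate consequence of the compactness of $A^{-1}$ and the classical spectral theorem.
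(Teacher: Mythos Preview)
Your proposal is correct and follows essentially the same approach as the paper: establish compactness and skew-symmetry of $A^{-1}$, invoke the Hilbert--Schmidt spectral theorem (via the self-adjoint operator $iA^{-1}$), and then transfer the resulting eigenbasis back to $A$ through the correspondence of eigenvectors. You are in fact more explicit than the paper on two points it leaves to the reader --- the exclusion of $0$ as an eigenvalue and the integration-by-parts verification of skew-symmetry across the interface $x=l_0$ --- but these are elaborations of the same argument rather than a different route.
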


In this work, we concentrate on the asymptotic behavior of trajectories of the closed-loop system. The asymptotic behavior is determined by invariant subsets of the set ${Z=\{\xi\in D(A)\,|\,\dot V(\xi)=0\}}$, where $V(\xi)$ is the weak Lyapunov functional constructed in~\cite{ZK2013} in the form
$$2V=\int\limits_0^l(\rho v^2(x)+EI(u''(x))^2)dx+mq^2+\varkappa p^2=\|\xi\|^2_X.$$
The required spectral properties of the infinitesimal generator have been obtained in~\cite{KZB2021}. It has been shown there that the eigenvalues $\lambda_j$, $j=1,2,\dots$, of the corresponding spectral problem can be obtained as the roots of the simplified frequency equation
\begin{equation}\label{Frq}
\Phi_0(\mu)=0,
\end{equation}
where
\begin{equation*}
\Phi_0(\mu)= 2\sin\mu(l-l_0)\sin\mu l_0-\sin\mu l,
\end{equation*}
$\mu=\left(\frac\rho{EI}\omega^2\right)^{1/4}$, $\omega={\rm Im}\lambda$.
In fact, the frequency equation has been obtained in much more complicated form, and its equivalence to~\eqref{Frq} in the sense of limit behaviour of the roots has also been proved in the above-mentioned reference.

\begin{lemma}\label{le_3}
Assume that $\mu_j$ are the roots of the truncated frequency equation~\eqref{Frq} and there are no multiple roots among $\mu_j$. If $\frac{l_0}l$ is rational, then there exists a $\tau>0$ such that the system of functions $\{e^{\lambda_jt}\}_{j=1}^\infty$ is minimal in $L^2(0,\tau)$.
\end{lemma}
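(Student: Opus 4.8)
The plan is to establish minimality of the exponential family $\{e^{\lambda_j t}\}$ via the classical criterion that a system of exponentials is minimal in $L^2(0,\tau)$ if and only if there is a function in the associated Paley--Wiener space that vanishes at all the frequencies except one; equivalently, by Ingham-type and Beurling density arguments, one reduces minimality to a separation (uniform gap) property of the frequencies together with a bound on their counting function. So the first step is to extract the asymptotics of the roots $\mu_j$ of $\Phi_0(\mu)=2\sin\mu(l-l_0)\sin\mu l_0-\sin\mu l=0$. Writing $l_0/l=p/q$ with $p,q\in\mathbb N$ coprime, the three sines are all periodic with common period $2\pi q/l$ in $\mu$, so $\Phi_0$ is itself $\frac{2\pi q}{l}$-periodic; hence its zero set is a finite union of arithmetic progressions, i.e. $\mu_j=\beta_j + \frac{2\pi q}{l}\,\lfloor (j-1)/N\rfloor$ for finitely many residues $\beta_1,\dots,\beta_N$ in one period (here I would invoke the hypothesis that $\Phi_0$ has no multiple roots, which guarantees the $\beta_i$ are distinct and each is a simple zero). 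Consequently the imaginary parts $\omega_j=\sqrt{EI/\rho}\,\mu_j^2$ grow quadratically, $\omega_j\sim c j^2$, and — this is the key quantitative consequence — consecutive frequencies are separated by a gap that grows like $c'j$, so in particular $\inf_j(\omega_{j+1}-\omega_j)>0$ and the sequence is uniformly separated.

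Next I would pass from the real frequency sequence $\{\omega_j\}$ to the complex exponents $\{\lambda_j\}$. By the spectral analysis of~\cite{KZB2021} the eigenvalues lie in a vertical strip $|\mathrm{Re}\,\lambda_j|\le \Lambda_0$ with $\mathrm{Im}\,\lambda_j=\omega_j$, so the perturbation from the imaginary axis is uniformly bounded; this does not destroy the separation property and the counting function $n(r)=\#\{j:|\lambda_j|\le r\}$ still satisfies $n(r)=O(\sqrt r)$, i.e. has density zero. The classical result I would cite here (Levinson, Redheffer, or Avdonin--Ivanov, ``Families of Exponentials'') is: if $\{\lambda_j\}$ lies in a strip parallel to the imaginary axis, is uniformly separated, and has upper density $D^+$, then $\{e^{\lambda_j t}\}$ is minimal in $L^2(0,\tau)$ for every $\tau>2\pi D^+$; when the density is zero this holds for every $\tau>0$, so one may in fact take $\tau$ to be any positive number. (To be safe, if the $\lambda_j$ are not all distinct or the density statement is delicate, I would instead construct the biorthogonal sequence directly using the generating function $G(\lambda)=\prod_j(1-\lambda/\lambda_j)$ — which converges since the density is zero — and show $G(\lambda)/(G'(\lambda_k)(\lambda-\lambda_k))$ is of exponential type $\le\tau$ and lies in $L^2$ of the line, hence its inverse Fourier transform is the required biorthogonal element supported in $[0,\tau]$.)

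The main obstacle I anticipate is the transition from the \emph{simplified} frequency equation~\eqref{Frq} to the \emph{actual} eigenvalues $\lambda_j$ of the closed-loop generator: Lemma~\ref{le_3} is stated in terms of the roots $\mu_j$ of $\Phi_0$, but minimality of $\{e^{\lambda_j t}\}$ concerns the true spectrum. One must either (i) invoke the result from~\cite{KZB2021} that the true roots are asymptotically close to the $\mu_j$ — close enough that the difference $\sum_j|\lambda_j^{\mathrm{true}}-\lambda_j|<\infty$, which by Bari's theorem preserves minimality under a compact (even Hilbert--Schmidt) perturbation of the exponential family — or (ii) prove that the families $\{e^{\lambda_j^{\mathrm{true}}t}\}$ and $\{e^{\lambda_j t}\}$ are ``quadratically close'' and appeal to the stability of minimality under such perturbations. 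A secondary, more technical point is verifying that the arithmetic-progression structure really does give a \emph{uniform} lower gap: the residues $\beta_i$ in one period are distinct by the no-multiple-roots hypothesis, but one should check no two progressions interlace so tightly that the minimal spacing over all $j$ degenerates — this is where the rationality of $l_0/l$ is essential, since for irrational ratio $\Phi_0$ is almost periodic rather than periodic and the gaps can shrink to zero (small-divisor phenomenon), which is exactly why the hypothesis is imposed.
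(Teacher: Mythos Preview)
Your approach is essentially the paper's: exploit rationality of $l_0/l$ to make $\Phi_0$ periodic, read off the arithmetic-progression structure of the $\mu_j$, deduce quadratic growth of the $\lambda_j$ and hence zero density of the counting function, and then invoke a standard minimality criterion for exponentials (the paper cites Theorem~1.2.17 of Krabs~\cite{K1992}, which is precisely the density criterion you describe). Your ``main obstacle'' paragraph is unnecessary for this lemma: here the $\lambda_j$ are \emph{defined} as $i\sqrt{EI/\rho}\,\mu_j^2$ from the roots of $\Phi_0$ (and, since $A$ is skew-adjoint, they are purely imaginary rather than merely in a strip), so no perturbation or Bari-type argument from the true spectrum is required.
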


\begin{proof}
The function $\Phi_0$ is analytic. As $\Phi_0\not\equiv{\rm const}$, the set
$$\{\mu\in[0,+\infty)\;|\;\Phi_0(\mu)=0\}$$
is totally disconnected.
The condition $\frac{l_0}l=\frac{p_1}{p_2}\in\mathbb{Q}$ guarantees that the function $\Phi_0$ is periodic. Its period $P$ may be calculated according to the following formula:
$$P=\frac{2\pi}{|2l_0-l|}\cdot\frac{|2p_1-p_2|}{GCD(p_2,2p_1-p_2)},\quad p_1,p_2\in\mathbb{N}.$$

If $\mu_0,\dots,\mu_{k-1}$ are roots of the function $\Phi_0$ on $[0;P)$, then
$$\mu_n=\left[\frac nk\right]P+\mu_{\left\{\frac nk\right\}k},\quad n=1,2,\dots$$
As it has been shown in~\cite{KZB2021}, the eigenvalues $\lambda_j=i\sqrt{\frac{EI}\rho}\mu_j^2$ grow quadratically with respect to $j$.
Consider the function $Q'(x)=\max\{n\in\mathbb{N}\;|\;\mu_n^2<x\}$. The following estimates are fulfilled:
$$\frac kP\left(\sqrt x-\mu_{\left\{\frac nk\right\}k}\right)-1\leq Q'(x)\leq\left[\frac kP\left(\sqrt x-\mu_{\left\{\frac nk\right\}k}\right)\right].$$
The quantity of eigenvalues $\lambda_j$ on $[y,y+z)$ is determined by the function $Q[y,y+z)=Q'(y+z)-Q'(y)$ which satisfies the following estimate:
$$Q[y,y+z)\leq\left[\frac kP\sqrt{y+z}\right]-\left[\frac kP(\sqrt y-P)\right]+1\leq\frac kP\left(\sqrt{y+z}-\sqrt y+P\right)+2.$$
Thus, we have
$$\underset{y\to\infty}{\lim\sup}\;\underset{z\to\infty}{\lim\sup}\;\frac{Q[y,y+z)}z=
\underset{y\to\infty}{\lim\sup}\;\underset{z\to\infty}{\lim\sup}\;\frac1z\left(\frac kP(\sqrt{y+z}-\sqrt y+P)+2\right)=0.$$
According to Theorem~1.2.17~\cite{K1992}, the system of functions $\{e^{\lambda_jt}\}_{j=1}^\infty$ is minimal in $L^2(0,\tau)$, $\forall\tau>0$.
\end{proof}

\section{Precompactness of the Trajectories}\label{swc_Precomp}
In this section, we will prove that the trajectories of the closed-loop system are precompact. For this purpose we will construct the resolvent of $\tilde{A}$ and prove its compactness. At first, we will construct the resolvent for the operator $\tilde{A}_M$ with constant parameters on place of controls $M_j$, and then we will substitute  feedback~\eqref{Ctrl} into the obtained equations.

\begin{theorem}\label{th_1}
Let the operator $\tilde{A}$ be defined above. Then the resolvent ${R_\lambda(\tilde{A})=(I-\lambda\tilde{A})^{-1}:X\to X}$ of $\tilde{A}$ is compact.
\end{theorem}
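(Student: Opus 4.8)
The plan is to reduce the resolvent equation $(I-\lambda\tilde{A})\xi = \hat\xi$ to the already-solved inverse-operator equation $\tilde{A}\eta = \zeta$ and then exploit the compactness results of the previous two lemmas. Specifically, I would first observe that solving $(I-\lambda\tilde{A})\xi=\hat\xi$ for $\xi\in D(\tilde{A})$ is equivalent, after dividing by $\lambda$ and rearranging, to $\tilde{A}\xi=\tfrac1\lambda(\xi-\hat\xi)$; substituting the feedback law~\eqref{Ctrl} into~\eqref{A}, this is a linear boundary-value problem for the first component $u$ (a fourth-order ODE on each of $(0,l_0)$ and $(l_0,l)$ with the transmission conditions at $l_0$ encoded in $D(\tilde{A})$) coupled to algebraic relations for $v$, $p$, $q$. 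The feedback terms $M_j=-\alpha_j\int_0^l\psi_j''v\,dx$ are bounded linear functionals of $\xi$, so the closed-loop operator $\tilde A$ differs from $A$ by a \emph{bounded} perturbation.

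The key steps, in order, are: (i) write down the closed-loop resolvent equation explicitly as above and reduce the system so that $v=\frac1\lambda(u-\hat u)$, $q=\frac1\lambda(p-\hat p)$, and $p=u(l_0)$, leaving a single scattering-type BVP for $u$; (ii) show this BVP is uniquely solvable for $\hat\xi\in X$ for all sufficiently small $\lambda>0$ — here I would argue that the relevant $4\times4$ (or, accounting for the two intervals, $8\times8$) determinant is a real-analytic, non-identically-vanishing function of $\lambda$ that is nonzero at $\lambda=0$ (where it reduces to $\det K\neq0$ from Lemma~\ref{le_1}), hence nonzero on an interval $(0,\lambda_0)$; (iii) derive, exactly as in the proof of Lemma~\ref{le_1} via Poincaré and Cauchy--Schwarz estimates, a bound $\|R_\lambda(\tilde A)\hat\xi\|_{X'}\le \Lambda_\lambda\|\hat\xi\|_X$ where $X'=H^3\times H^1\times\mathbb{C}^2$, using that the fundamental solutions of the fourth-order operator and the integral terms gain regularity; (iv) invoke the compact Sobolev embedding $X'\hookrightarrow\hookrightarrow X$ to conclude that $R_\lambda(\tilde A):X\to X$ is compact. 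An alternative, cleaner route for (iv): once solvability is known, write $R_\lambda(\tilde A)=\big(I-\lambda\tilde A\big)^{-1}$ and use the resolvent identity together with the compactness of $A^{-1}$ and the bounded-perturbation structure; but the direct regularity estimate mirrors the paper's established technique and is self-contained.

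The main obstacle I expect is step~(ii): verifying that the determinant governing the closed-loop BVP does not vanish for small $\lambda>0$. The presence of the $k$ feedback integrals $M_j$ makes the system nonlocal, so the $u$-equation is not a pure ODE but an ODE with finitely many integral constraints; one must check that the resulting finite-dimensional linear system (after representing $u$ through a Green's-function/variation-of-parameters formula and substituting back into the $\psi_j''$-weighted integrals) stays invertible. Continuity of this determinant in $\lambda$, analyticity, and its known value at $\lambda=0$ should settle it, but the bookkeeping — particularly keeping track of the transmission conditions at $l_0$ and the shaker's algebraic coupling through $p=u(l_0)$, $q=v(l_0)$ — is where the real work lies. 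A secondary subtlety is ensuring the regularity gain in step~(iii) genuinely lands in $H^3$ for the first component despite the jump in $u'''$ at $l_0$: the embedding argument must be applied on each subinterval $(0,l_0)$ and $(l_0,l)$ separately, matching the structure of $D(\tilde A)$.
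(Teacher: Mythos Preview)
Your overall framework---reduce $(I-\lambda\tilde A)\xi=\hat\xi$ to a fourth-order BVP for $u$ on the two subintervals, check that the associated finite linear system is invertible for some $\lambda>0$, derive a bound $\|R_\lambda(\tilde A)\hat\xi\|_{X'}\le C\|\hat\xi\|_X$ into a smoother space, and conclude by compact Sobolev embedding---is exactly the paper's strategy. The paper also handles the nonlocal feedback the way you anticipate: it first freezes $M_1,\dots,M_k$ as constants (obtaining one $4\times4$ system from the interface conditions), then substitutes $v=R_2\hat\xi$ into~\eqref{Ctrl} to get a second $k\times k$ linear system for the $M_j$, whose determinant expands as $1+O(\mu)$.

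There is, however, a slip in your step~(ii). You claim the BVP determinant is analytic in $\lambda$ and ``nonzero at $\lambda=0$, where it reduces to $\det K\neq0$ from Lemma~\ref{le_1}.'' That is not correct: at $\lambda=0$ the resolvent equation is $\xi=\hat\xi$ and involves no BVP, and after your substitution $v=\tfrac1\lambda(u-\hat u)$ the reduced ODE $(EIu'')''+\tfrac{\rho}{\lambda^2}u=\text{RHS}$ is \emph{singular} as $\lambda\to0$, not regular. The matrix $K$ of Lemma~\ref{le_1} arises from $\tilde A\xi=\hat\xi$, which is the formal $\lambda\to\infty$ limit, not $\lambda\to0$. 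The paper resolves this by writing down the explicit fundamental solutions $z_1(x)=\sin\eta x\cosh\eta x$, $z_2(x)=\cos\eta x\sinh\eta x$ with $\eta=(\rho/(\lambda^2 EI))^{1/4}$ and Taylor-expanding the interface determinant as $\eta\to0$ (equivalently $\lambda\to\infty$): the leading term is a nonzero multiple of $\eta^6$, so the determinant is nonzero for $\eta$ small but positive. Your analyticity-plus-continuity idea is salvageable if you switch to the parameter $\eta$ and expand near $\eta=0$; as stated, the $\lambda\to0$ argument does not go through. (Your alternative via the resolvent identity and bounded perturbation of $A$ is sound, but note that the paper uses Theorem~\ref{th_1} itself to establish maximality of $\tilde A$, so you cannot assume a priori that the resolvent set of $\tilde A$ is nonempty; you would need to invoke bounded-perturbation generation theory for $A+B$ first.) Finally, the paper's target space is $X'=\mathring{H}{}^3\times H^2\times\mathbb C^2$ rather than your $H^3\times H^1\times\mathbb C^2$; either embeds compactly in $X$, so this is harmless.
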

\begin{proof}
Let $M_1,...,M_k$ in~\eqref{A} be constants, $F=-\alpha_0q$.
Denote $\eta=\left(\frac\rho{\lambda^2EI}\right)^{1/4}$, ${\Gamma(\hat\xi(s))=\frac1{EI}\left(\sum\limits_{j=1}^k\psi_j''M_j+\frac\rho\lambda\left(\hat v+\frac{\hat u}\lambda\right)\right)}$, $z_1(x)=\sin{\eta x}\cosh{\eta x}$, $z_2(x)=\cos{\eta x}\sin{\eta x}$, $z_3(x)=z_1(x-s)-z_2(x-s)$, then the equation
\begin{equation}\label{Max}
(I-\lambda\tilde{A}_M)\xi=\hat\xi
\end{equation}
is solvable with respect to $\xi\in D(\tilde{A})$ for any vector $\hat\xi\in X$, and the solution can be written for some $\lambda>0$ as follows:
$$\xi=\left(
               \begin{array}{c}
                 R_1\hat\xi \\
                 R_2\hat\xi \\
                 R_3\hat\xi \\
                 R_4\hat\xi \\
               \end{array}
             \right)\hat\xi,$$
where
\begin{equation}\label{Res}
\begin{aligned}
&R_1\hat\xi=\left\{
   \begin{aligned}
    &B_1z_1(x)+B_2z_2(x)+\frac1{4\eta^3}\int\limits_0^xz_3(x)\Gamma(\hat\xi(s))ds,\,x\leq l_0,\\
    &B_3z_1(x-l)+B_4z_2(x-l)-\frac1{4\eta^3}\int\limits_0^xz_3(x)\Gamma(\hat\xi(s))ds,\,x>l_0,
   \end{aligned}
   \right.\\
&R_2\hat\xi=\left\{\begin{aligned}
    &\left(\frac{EI}\rho\right)^{1/2}\left(2\eta^2(B_1z_1(x)+B_2z_2(x)-\hat u)+\frac1{2\eta}\int\limits_0^xz_3(x)\Gamma(\hat\xi(s))ds\right),\,x\leq l_0,\\
    &\left(\frac{EI}\rho\right)^{1/2}\left(2\eta^2(B_3z_1(x-l)+B_4z_2(x-l)-\hat u)-\frac1{2\eta}\int\limits_0^xz_3(x)\Gamma(\hat\xi(s))ds\right),\,x>l_0,
   \end{aligned}\right.\\
&R_3\hat\xi=B_1z_1(l_0)+B_2z_2(l_0)+\frac1{4\eta^3}\int\limits_0^{l_0}z_3(l_0)\Gamma(\hat\xi(s))ds,\\
&R_4\hat\xi=\left(\frac{EI}\rho\right)^{1/2}\left(2\eta^2(B_1z_1(l_0)+B_2z_2(l_0)-\hat p)+\frac1{2\eta}\int\limits_0^{l_0}z_3(l_0)\Gamma(\hat\xi(s))ds\right).
\end{aligned}
\end{equation}

From the interface conditions $u^{(j)}(l_0-0)-u^{(j)}(l_0+0)=0$, $j=\overline{0,2}$, the values of bounded linear functionals $B_1(\hat\xi),\dots,B_4(\hat\xi)$ can be obtained as solution of the following linear algebraic system:
\begin{equation*}
M\left(
            \begin{array}{c}
              B_1 \vphantom{\int\limits_0^l} \\
              B_2 \vphantom{\int\limits_0^l} \\
              B_3 \vphantom{\int\limits_0^l} \\
              B_4 \vphantom{\int\limits_0^l} \\
            \end{array}
          \right)
=\frac1{4\eta^3}\left(
                  \begin{array}{c}
                    \int\limits_0^lz_3(l_0-s)\Gamma(\hat\xi(s))ds \\
                    2\eta\int\limits_0^l\sin\eta(l_0-s)\sinh\eta(l_0-s)\Gamma(\hat\xi(s))ds \\
                    2\eta^2\int\limits_0^l(z_1(l_0-s)+z_2(l_0-s))\Gamma(\hat\xi(s))ds \\
                    \int\limits_0^lz_3(l_0-s)\Gamma(\hat\xi(s))ds \\
                  \end{array}
                \right)+\gamma,
\end{equation*}
where $\gamma=(0,0,0,\int\limits_0^l\cos\eta(l_0-s)\cosh\eta(l_0-s)\Gamma(\hat\xi(s))ds+\frac{2\eta^2}\rho\left(2\eta^2(m+\alpha_0)\hat p+\sqrt{\frac\rho{EI}}(m\hat q+\hat p)\right))^T$.

The determinant of $M$ for the above system is expanded into Taylor's series with respect to $\eta$ as $\eta\to0$:
\begin{equation*}
\det(M)=\frac{8l}{3m}\left(\frac\rho{EI}\right)^{1/2}\left(\varkappa l_0^2(l-l_0)^2+3EIl\right)\eta^6+O(\eta^{10}).
\end{equation*}
So, it is possible to choose $\eta$ small enough such that $\det(M)\neq0$.

The parameters $M_j$ in~\eqref{Res} can be excluded by substituting $v=R_2\hat\xi$ into the distributed control formula~\eqref{Ctrl}.
The determinant of the resulting algebraic system's matrix is decomposable into Taylor's series with respect to $\mu$ (${\mu=\sqrt[4]{\frac\rho{4\lambda^2EI}}}$):
\begin{equation*}
\det(M_{ij})=1+O(\mu),
\end{equation*}
so we can select $\mu$ small enough such that $\det(M_{ij})\neq0$.
Thus the resolvent of $\tilde{A}$ has been constructed.

Consider the space $X'=\overset{\;\,\circ}H\,\!^3\times H^2\times \mathbb C^2$. Relations~\eqref{Res} define linear continuous mapping $X\mapsto X'$, and there exists a positive  definite quadratic form $\Lambda(B_1(\hat\xi),\dots,B_4(\hat\xi))$ such that the inequality
\begin{equation*}
\|(I-\lambda\tilde{A})^{-1}\hat\xi\|^2_{X'}\leq\Lambda\|\hat\xi\|^2_X
\end{equation*}
is fulfilled for each $\hat\xi\in X$.

As $X'\subseteq X$, the mapping $R_\lambda(\tilde{A}):X\to X$ is a compact operator.
\end{proof}

\begin{remark}
The solvability of equation~\eqref{Max} proves that the operator $\tilde{A}$ is maximal. This fact was taken as an assumption in our previous work on non-asymptotic stability~\cite{ZK2013}.
\end{remark}

\section{Asymptotic Stability}\label{sec_AsStab}

Sufficient conditions for the  asymptotic stability of the considered closed-loop system is formulated in the following theorem.
\begin{theorem}\label{th_2}
Let $\{\xi_i\}_{i\in\mathbb N}$ be eigenvectors of the operator $A$ and, for each $i\in\mathbb N$, either $v_i(l_0)\neq0$ or ${\int\limits_0^l\psi_j''(x)v_i(x)dx\neq0}$ for some $j\in {1,\dots,k}$. Then the solution $\xi=0$ of system~\eqref{infdim} is strongly asymptotically stable.
\end{theorem}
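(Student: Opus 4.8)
The strategy is to invoke the infinite-dimensional LaSalle invariance principle~\cite{L1976} (cf.~\cite{Z2006}). Lyapunov stability of $\xi=0$ is immediate: since $2V(\xi)=\|\xi\|_X^2$ and, by the $m$-dissipativity of $\tilde A$, $\dot V(\xi)\le0$ along the closed-loop flow~\eqref{infdim} with feedback~\eqref{Ctrl}, one has $\|\xi(t)\|_X\le\|\xi(0)\|_X$ for all $t\ge0$. For attractivity, note that $\tilde A$ generates a contraction $C_0$-semigroup with compact resolvent (Theorem~\ref{th_1}); hence every trajectory is precompact in $X$, and for each initial state its $\omega$-limit set $\Omega$ is nonempty, compact, invariant, contained in $D(A)$, and $V$ is constant on $\Omega$. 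By the invariance principle $\Omega\subseteq\mathcal M$, where $\mathcal M$ denotes the maximal invariant subset of $Z=\{\xi\in D(A):\dot V(\xi)=0\}$, so it suffices to prove $\mathcal M=\{0\}$.

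The next step is to describe $Z$ explicitly. Differentiating $V$ along~\eqref{infdim} with the controls~\eqref{Ctrl}, using $q=v(l_0)$ on $D(A)$ and the conservativity of $A$, gives
\[
\dot V(\xi)=-\sum_{j=1}^{k}\alpha_j\Big(\int_0^l\psi_j''(x)v(x)\,dx\Big)^{2}-\alpha_0\,v(l_0)^{2},
\]
so that $Z=\{\xi\in D(A):v(l_0)=0\ \text{and}\ \int_0^l\psi_j''v\,dx=0,\ j=\overline{1,k}\}$; since the feedback controls vanish on $Z$, the flow on $\mathcal M\subseteq Z$ is governed by the conservative equation $\dot\xi=A\xi$.

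The heart of the argument is to show $\mathcal M=\{0\}$. Pick $\xi^*\in\mathcal M$ and expand it in the eigenbasis of $A$ supplied by Corollary~\ref{cor_2}: $\xi^*=\sum_i c_i\xi_i$, with purely imaginary, simple eigenvalues $\lambda_i$, so $e^{tA}\xi^*=\sum_i c_i e^{\lambda_i t}\xi_i$. Since $\xi^*\in D(A)$, the partial sums of this series and of its image under $A$ converge in $X$ uniformly in $t$; as $\xi\mapsto\int_0^l\psi_j''v\,dx$ is bounded on $X$ and $\xi\mapsto v(l_0)$ is bounded on $D(A)$ (Sobolev embedding), the inclusion $e^{tA}\xi^*\in Z$ for all $t$ yields the scalar identities
\[
\sum_i c_i\,v_i(l_0)\,e^{\lambda_i t}=0,\qquad \sum_i c_i\Big(\int_0^l\psi_j''(x)v_i(x)\,dx\Big)e^{\lambda_i t}=0,\quad j=\overline{1,k},
\]
valid in $L^2(0,\tau)$ for the $\tau$ of Lemma~\ref{le_3}. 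By the minimality of $\{e^{\lambda_i t}\}_{i=1}^\infty$ in $L^2(0,\tau)$ (Lemma~\ref{le_3}), pairing with the biorthogonal family gives $c_i\,v_i(l_0)=0$ and $c_i\int_0^l\psi_j''v_i\,dx=0$ for every $i$ and $j$. The hypothesis of the theorem guarantees that for each $i$ at least one of these observability coefficients is nonzero, whence $c_i=0$ for all $i$, i.e.\ $\xi^*=0$. Therefore $\mathcal M=\{0\}$, every trajectory tends strongly to $0$, and together with the Lyapunov stability above this proves strong asymptotic stability.

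I expect the delicate point to be the rigorous passage to the scalar identities in $L^2(0,\tau)$: one must legitimately evaluate the eigenfunction expansion of $e^{tA}\xi^*$ under the trace functional $\xi\mapsto v(l_0)$, which is unbounded on $X$; this relies on $\mathcal M\subseteq D(A)$ (indeed $\mathcal M\subseteq\bigcap_n D(A^n)$ by bootstrapping the invariance) and on the uniform-in-$t$ graph-norm convergence of the partial sums, so that the series can be summed termwise against the trace functional. A secondary but necessary issue is to confirm that the standing hypotheses of Lemma~\ref{le_3} — rationality of $l_0/l$ and simplicity of the roots $\mu_j$ — are in force, so that the minimality of $\{e^{\lambda_i t}\}$, hence the separation of the modes, is available.
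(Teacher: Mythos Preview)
Your proof is correct and follows essentially the same route as the paper: LaSalle's invariance principle, reduction on $Z$ to the conservative dynamics $\dot\xi=A\xi$, eigenfunction expansion via Corollary~\ref{cor_2}, and then the minimality of $\{e^{\lambda_i t}\}$ from Lemma~\ref{le_3} to kill each coefficient. In fact your write-up is tighter than the paper's in one respect: you explicitly use \emph{both} observability relations $v(l_0)=0$ and $\int_0^l\psi_j''v\,dx=0$ coming from $\dot V=0$, which is what the ``either/or'' hypothesis of the theorem actually demands, whereas the paper's proof as written invokes only the integral condition; your caveats about the trace functional and about the standing hypotheses of Lemma~\ref{le_3} (rational $l_0/l$, simple $\mu_j$) are likewise well placed and are assumptions the paper implicitly relies on.
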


Basically, the assumption $\int\limits_0^l\psi_j''(x)v_i(x)dx\neq0$ for some $j\in\{1,\dots,k\}$ means that the $j$-th piezoactuator is not located at a node of eigenfunction of the beam.

\begin{proof}
It suffices to prove that the set $Z=\{\dot V=0\}$ does not contain any nontrivial trajectory of the closed-loop system.

Let $\xi$ be a solution of~\eqref{infdim}, and let $\xi\in Z$, $\forall t\geq0$. The control $y$ vanishes on the set $Z$. It means that the solution $\xi$ satisfies the following equation in $Z$:
\begin{equation}\label{0Ctlr}
\dot\xi=A\xi.
\end{equation}

Let us expand the vector $\xi$ into the  series of eigenfunctions of $A$:
\begin{equation}\label{xi}
\xi(t)=\sum\limits_{i=1}^\infty r_i(t)\xi_i.
\end{equation}
Here we refer to the system $\{\xi_i\}_{i=1}^\infty$ as a basis of $X$ according to Corollary~\ref{cor_2}.

Substituting the expansion~\eqref{xi} into~\eqref{0Ctlr} and taking into account $A\xi_i=\lambda_i\xi_i$, where $\lambda_i$ are the eigenvalues of $A$, $i=1,2,\dots$, we obtain the following:
\begin{equation*}
\sum\limits_{i=1}^\infty\dot r_i(t)\xi_i=\sum\limits_{i=1}^\infty\lambda_ir_i(t)\xi_i.
\end{equation*}
By virtue of the uniqueness of the expansion in a basis,
\begin{equation*}
\dot r_i=\lambda_ir_i,
\end{equation*}
which leads to $r_i(t)=r_i^0e^{\lambda_it}$.

Insofar as $\xi\in M$,
\begin{equation}\label{1xi}
\int\limits_0^l\psi_j''(x)v(x)dx=0,\quad j=\overline{1,k}.
\end{equation}
Let us write the decomposition of $v(x)$ in the basis $\{\xi_i\}_{i=1}^\infty$:
\begin{equation}\label{2xi}
v(x)=\sum\limits_{i=1}^\infty r_i(t)v_i(x)=\sum\limits_{i=1}^\infty r_i^0e^{\lambda_it}v_i(x).
\end{equation}
Subsituting~\eqref{2xi} into~\eqref{1xi}, we obtain that the linear combination of functions $e^{\lambda_it}$ vanishes. According to Lemma~\ref{le_3}, the functions $e^{\lambda_it}$ are linearly independent, so all the coefficients of linear combination have to be zero:
\begin{equation*}
r_i^0\int\limits_0^l\psi_j''(x)v_i(x)dx=0,\quad i=1,2,\dots
\end{equation*}
Since $\int\limits_0^l\psi_j''(x)v_i(x)dx\neq0$, then $r_i^0=0$, $\forall i\in\mathbb{N}$ which yields $\xi\equiv0$.

So, the set $Z$ does not contain any nontrivial trajectory of system~\eqref{infdim}. According to LaSalle's invariance prinsiple (\cite{L1976}), the trivial solution of the system considered is asymptotically stable.
\end{proof}

\section{Conclusion}\label{sec_Conc}
This work concludes the investigation on asymptotic stability of the vibrating simply-supported flexible beam with distributed control and an attached point mass. The answer to the question about asymptotic stability is positive while the system is not necessarily exponentially stable.

In this paper, we do not take into account the dissipation with the aim to investigate the applicability of the Lyapunov method when there is no natural damping.
The proposed state feedback law is of mathematical interest, while the problems of observer design and observer-based stabilization remain for further study.

This work was supported in part by the German Research Foundation (project ZU 359/2-1).

\end{document}